\newtheorem{lemma}{Lemma}[section]
\newtheorem{theorem}[lemma]{Theorem}
\newtheorem{Example}[lemma]{Example}
\newcommand{\fp}{\hspace*{\fill} $\Box$}
\makeatletter \@addtoreset{equation}{section} \makeatother
 \date{}
\begin{document}

\makeatother\title{Complex Dunkl operators
\thanks{2000 Mathematics subject Classification: Primary 20F55;
Secondary 32A30, 33C52.}}
\author{Guangbin  Ren
\\
  {\small University of Science and Technology of China,
 Department}
\\ {\small of Mathematics, Hefei, Anhui 230026, P. R. China}
\\{\small E-mail:  rengb@ustc.edu.cn }
\and Helmuth R. Malonek
\\ {\small  Universidade de Aveiro, Departamento de Matem\'atica}
\\ {\small  P-3810-193 Aveiro, Portugal}
\\ {\small E-mail:  hrmalon@mat.ua.pt}}
 \maketitle

 \maketitle
\markboth {}{ {\it Ren} and {\it Malonek} }

\normalsize
\begin{abstract}
The real theory of the Dunkl operators   has  been developed very
extensively, while there still lacks the corresponding complex
theory. In this paper we introduce
 the complex Dunkl
operators for certain Coxeter groups. These complex Dunkl operators
have the  commutative property, which makes it possible to establish
a corresponding  complex  analysis of Dunkl operators.

\par \vskip0.3cm
 {\bf Keywords:} Dunkl operators, complex Coxeter group
\end{abstract}

\normalsize
 \baselineskip18pt
\par


\section{Introduction}

In the real Euclidean spaces, Dunkl extended the classical real
analysis by introducing the generalized differential operators
related to Coxeter groups \cite{{dunk2},{dunkl}}. The resulting
operators are  a family of
 commutative
differential-difference operators $\mathcal T_j$, called Dunkl
operators, which can be considered as perturbations of the usual
partial derivatives by reflection parts. These operators step from
the analysis of quantum many body system of
Calogero-Moser-Sutherland type \cite{DV} in mathematical physics.
They  also have root in the theory of special functions of several
variables, motivated from the theory of Riemann symmetric spaces,
whose spherical functions can be written as multi-variable
parametrized special functions. We refer to \cite{CKR, dunklx, Jeu,
Ros, SO} for the theory of special functions, Fourier  transforms,
Segal-Bargmann transforms,  and Clifford analysis in the Dunkl
setting.

  The purpose of the article is to
 establish the basis for the complex Dunkl
analysis  by introducing the complex Dunkl operators for certain
Coxeter groups and establishing their important property of
 commutativity.
We mention that Dunkl and Opdam \cite{DO} also introduced the  Dunkl
operators for complex reflection groups. But their operators are
generalization of the real partial differentiate operators, other
than the generalization of the complex operator
$\frac{\partial}{\partial z}$.

\section{Real Dunkl operators}

In this section, we recall the definition of the  real Dunkl
operators.

In $\mathbb R^N$, we consider the standard inner product
$$\langle x,
y\rangle=\sum_{j=1}^N x_j y_j$$ and the norm  $||x||=\langle x,
x\rangle^{1/2}$.

For a row vector $u\in\mathbb R^N\setminus\{0\}$, the reflection in
the hyperplane $\langle u \rangle^{\perp}$ orthogonal to $u$ is
defined by
$$\sigma_u(x)= x\sigma_u:=x-2\frac{\langle x, u\rangle}{||u||^2} u.$$

A root system is a finite set $R$ of nonzero vectors in $\mathbb
R^N$ such that
$$\sigma_u(R)=R,\qquad \textrm{and}\qquad
R\cap\mathbb Ru=\{\pm u\}$$ for any $u\in R$.

A positive subsystem $R_+$ is any subset of $R$ satisfying $R =
R_+\cup \{-R_+\}$. For example, for any $u_0\in\mathbb R^N$ such
that $\langle u, u_0\rangle\neq 0$ for all $u\in R$, then
$R_+:=\{u\in R: \langle u, u_0\rangle>0\}$ is a positive subsystem.

The real Coxeter group $W=W(R)$ (or real finite reflection group)
generated by the root system $R\subset \mathbb R^N$ is the subgroup
of orthogonal group $O(N)$ generated by $\{\sigma_u: u\in R\}$.

A multiplicity function on $R$ is a complex-valued function
\begin{eqnarray*}
\kappa: R &\longrightarrow \mathbb C\\
v&\mapsto \kappa_v
\end{eqnarray*}
which is invariant under the Coxeter group, i.e.,
$$\kappa_u=\kappa_{ug}, \qquad \forall\ u\in
R, \quad \forall\ g\in W.$$

The Dunkl operator $\mathcal T_j$, associated with the Coxeter group
$W(R)$ and the multiplicity function $\kappa$, is the first order
differential-difference operator:

\begin{eqnarray*}\mathcal{T}_j p(x)&=& \frac{\partial p(x)}{\partial x_j}
+\sum_{v\in R_+}\kappa_v \frac{p(x)-p(x\sigma_v)}{\langle
x,v\rangle}v_j
 \end{eqnarray*}
for any polynomials $p$ in $\mathbb R^N$.

The Dunkl operator $\mathcal T_j$ is a  homogeneous differential
operator of degree $-1$. By the $W$-invariance of the multiplicity
function $\kappa$, we have
$$
g^{-1}\circ \mathcal T_u \circ g = \mathcal T_{ug},\qquad \forall\
g\in W(R), \quad u\in \mathbb R^N,$$ where
$$\mathcal T_u=\sum_{j=1}^N u_j \mathcal T_j.$$

The remarkable property of the Dunkl operators is that the family
$\{\mathcal T_u, u\in\mathbb R^N\}$ generates a commutative algebra
of linear operators on the $\mathbb C$-algebra of polynomial
functions on $\mathbb R^N$.

\begin{Example} In the one-dimensional case $N=1$,  the root system $R$ is of type
$A_1$ (see \cite{Hum}), the reflection group $W=\mathbb Z_2$, and
the multiplicity function is given by a single parameter
$\kappa\in\mathbb C$. The Dunkl operator is  given  by
$$
\begin{array}{lcl}
\mathcal Tf(x)&=&f'(x)+\kappa\displaystyle\frac{f(x)-f(-x)}{x}.
\end{array}$$
\end{Example}

\section{Complex Dunkl operators}

\textbf{Assumption A:} Under  the standard embedding $\mathbb
R^N\subset \mathbb C^N$, we  assume that $R$ is a root system in
$\mathbb C^N\cong \mathbb R^{2N}$ satisfying
$$R\subset \mathbb R^N.$$

\bigskip

Let $R$ be a root system in $\mathbb C^N$  satisfying Assumption A.
Then $R\subset \mathbb R^N$,  so that we can define the positive
subsystem  $R_+\subset \mathbb R^N\subset \mathbb C^N $.

For any operator $A\in O(N)$,   we can regard $A$ as a real
orthogonal matrix under a fixed basis $\{e_\alpha\}$ of $\mathbb
R^N$. Therefore, $A$ is also a unitary matrix in $\mathbb C^N$.
Under the same basis $\{e_\alpha\}$ of $\mathbb C^N$, the matrix $A$
can thus be regarded as a unitary operator in $\mathbb C^N$.
Therefore the matrix point of view results in  a standard embedding
$$O(N)\hookrightarrow U(N),$$
where $U(N)$ is the unitary group.

In $\mathbb C^N$, we will use the same notation $\langle \cdot,
\cdot\rangle$ for the  extension of the Euclidean inner product,
i.e.,  $$\langle z, w\rangle=\sum_{j=1}^N z_j \overline{w_j},\qquad
\forall\ z, w\in \mathbb C^N,$$ and denote the norm  $||z||=\langle
z, z\rangle^{1/2}$. We also denote the bilinear extension of the
Euclidean inner product by $$(z, w)=\sum_{j=1}^N z_j {w_j}.$$

Let
$$v\in R\subset\mathbb R^N\hookrightarrow\mathbb C^N$$ and define  the
reflection $\sigma_v\in U(N)$ as matrix by
$$(\sigma_v)_{ij}=\delta_{ij}-2\frac{v_iv_j}{||v||^2},$$ so that
$$\sigma_v(z)= z\sigma_v:=z-2\frac{\langle z, v\rangle}{||v||^2} v=z-2\frac{( z, v)}{||v||^2} v.$$

The complex Coxeter group $G=G(R)$ is the subgroup of $U(N)$
generated by $\{\sigma_v: v\in R\}$, namely,
$$G:=\langle\sigma_v\in U(N): v\in R\subset\mathbb R^N \hookrightarrow  \mathbb C^N \rangle.$$
Consequently, for our choice of the root system, we have
$$G\subset O(N)\hookrightarrow U(N).$$

A multiplicity function $\kappa$ is   a $G$-invariant complex-valued
function on $R$.

Let $R\subset\mathbb R^N$ be a root system  and $R_+$ be  a positive
subsystem. Now we define the complex Dunkl operator as the first
order differential-difference operators in coordinate form for $1\le
j\le N$  by
\begin{eqnarray*}\mathcal{D}_j p(z)&=& \frac{\partial p(z)}{\partial z_j}
+\sum_{v\in R_+}\kappa_v \frac{p(z)-p(z\sigma_v)}{\langle
z,v\rangle}v_j
 \end{eqnarray*}
for any holomorphic polynomials $p$ in $\mathbb C^N$.

To  understand the second summand, notice that
$$\frac{dp}{dt}=\sum_{j=1}^N \frac{dp}{dz_j} \frac{dz_j}{dt}+\sum_{j=1}^N
\frac{dp}{d\overline z_j} \frac{d\overline z_j}{dt} \quad {\mbox
{and} }\quad  \frac{dp}{d\overline z_j}=0,$$ we have
\begin{eqnarray*}\label{eq2.1}
\frac{p(z)-p(z\sigma_v)}{\left<z,v\right>}&=-\frac{1}{\left<z,v\right>}\int_0^1
\frac{dp}{dt}(t(z\sigma_v)+(1-t)z)dt
\\
&= \int_0^1 \left<\nabla p(t(z\sigma_v)+(1-t)z), \frac{2v}{||v||^2}
\right>dt,
\end{eqnarray*} where $\nabla$ is the complex gradient
$$\nabla p(z)=\left(\frac{\partial
P(z)}{\partial z_1}, \ldots, \frac{\partial P(z)}{\partial
z_N}\right).$$

 For the coordinate free form,  let $u\in\mathbb C^N$ and $p$ be a polynomial in $\mathbb C^N$. We define the complex Dunkl operator by
$$ \mathcal{D}_u p(z)=\sum_{j=1}^N \overline{ u_j}\mathcal{D}_j p(z),$$
 i.e.,
\begin{eqnarray}\label{eq3.3}
 \mathcal{D}_u p(z)&=& \langle\nabla p(z), u\rangle + \sum_{v\in R_+}
\kappa_v \frac{p(z)-p(z\sigma_v)}{\langle z, v \rangle}\langle v,
u\rangle.
\end{eqnarray}

\section{Some lemmas}

For the proof of our main theorem, we need some lemmas.

 The collection of all polynomials in $z$ is
denoted by $\mathbb C[z_1,\ldots,z_N]$. We shall also use the
abbreviation
$$\Pi^N:=\mathbb C[z_1,\ldots,z_N].$$

The right regular representation of $U(N)$ is the homomorphism
\begin{eqnarray*}
\mathcal R: U(N)&\longrightarrow {\mbox {End}}\ {\Pi^N}
\\ w&\longmapsto R(w)
\end{eqnarray*}
 given by
$$\mathcal R(w)p(z)=p(zw)$$ for all $z\in \mathbb C^N$ and $p\in \Pi^N$.

\begin{lemma}
For any $u\in\mathbb C^N$, $g\in  G$, $p\in\Pi^N$,  and $z\in\mathbb
C^N$, we have
$$\mathcal R(g)^{-1}\mathcal{D}_u
\mathcal R(g)p(z)=\mathcal{D}_{ug}p(z).$$
\end{lemma}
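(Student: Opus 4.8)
The plan is to rewrite the conjugated operator as $\mathcal{R}(g)^{-1}\mathcal{D}_u\mathcal{R}(g)p(z)=\bigl(\mathcal{D}_u q\bigr)(zg^{-1})$ with $q:=\mathcal{R}(g)p$, i.e. $q(z)=p(zg)$, and to verify that the two summands of the coordinate-free expression (\ref{eq3.3}) separately transform into the two summands of $\mathcal{D}_{ug}p(z)$. Before starting I would record two reductions that exploit $g\in G\subset O(N)$ acting on row vectors by the right unitary action. First, since $\sigma_{-v}=\sigma_v$, since $-v=v\sigma_v$ forces $\kappa_{-v}=\kappa_v$ by $G$-invariance of $\kappa$, and since the sign changes $\langle z,-v\rangle=-\langle z,v\rangle$ and $\langle -v,u\rangle=-\langle v,u\rangle$ cancel, each summand of the difference part is invariant under $v\mapsto -v$; hence I may replace $\sum_{v\in R_+}$ by $\tfrac12\sum_{v\in R}$. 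This symmetric form is what makes the proof clean, because $R$ (unlike $R_+$) is genuinely $G$-invariant, $Rg=R$ for all $g\in G$. Second, since $g$ has \emph{real} entries it commutes with complex conjugation, so both forms $\langle\cdot,\cdot\rangle$ and $(\cdot,\cdot)$ are $g$-invariant and satisfy the transfer rule $\langle ag^{-1},b\rangle=\langle a,bg\rangle$; I would establish this rule once and reuse it.

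For the gradient summand $\langle\nabla q(z),u\rangle$, I would apply the chain rule to $q(z)=p(zg)$: from $(zg)_k=\sum_j z_j g_{jk}$ one gets $\partial q/\partial z_j=\sum_k(\partial_k p)(zg)\,g_{jk}$, and contracting with $\overline{u_j}$ collapses, using that $g$ is real orthogonal so that $\sum_j g_{jk}\overline{u_j}=\overline{(ug)_k}$, to $\langle\nabla q(z),u\rangle=\langle\nabla p(zg),ug\rangle$. Evaluating at $zg^{-1}$ turns the inner argument $zg^{-1}g$ into $z$ and produces exactly $\langle\nabla p(z),ug\rangle$, the gradient part of $\mathcal{D}_{ug}p(z)$.

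The heart of the argument is the difference summand. After the substitution $z\mapsto zg^{-1}$ the typical term involves $q(zg^{-1})=p(z)$ and $q(zg^{-1}\sigma_v g)=p\bigl(z(g^{-1}\sigma_v g)\bigr)$, so the key identity to establish is the equivariance of reflections, $g^{-1}\sigma_v g=\sigma_{vg}$. I would prove this directly from $z\sigma_v=z-2\frac{(z,v)}{\|v\|^2}v$ together with $g$-invariance of $(\cdot,\cdot)$ and of the norm, giving $z\,g^{-1}\sigma_v g=z-2\frac{(z,vg)}{\|vg\|^2}vg=z\sigma_{vg}$. Granting this, the difference sum becomes $\tfrac12\sum_{v\in R}\kappa_v\frac{p(z)-p(z\sigma_{vg})}{\langle zg^{-1},v\rangle}\langle v,u\rangle$, and I would change the summation variable to $w=vg$. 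Here the three ingredients combine: $Rg=R$ makes $w$ range over $R$; $G$-invariance gives $\kappa_v=\kappa_w$; and the transfer rule gives $\langle zg^{-1},v\rangle=\langle z,w\rangle$ and $\langle v,u\rangle=\langle w,ug\rangle$. The sum then reads $\tfrac12\sum_{w\in R}\kappa_w\frac{p(z)-p(z\sigma_w)}{\langle z,w\rangle}\langle w,ug\rangle$, which is the difference part of $\mathcal{D}_{ug}p(z)$, and adding the two parts yields the claim.

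The main obstacle I anticipate is bookkeeping rather than depth: one must verify the reflection-conjugation identity and apply the two inner-product transfer rules in the correct slot, and every one of these steps quietly relies on $g$ having real entries, i.e. on $G\subset O(N)$ and not merely $G\subset U(N)$. Without that hypothesis conjugation would not commute through the Hermitian form and the cancellation in the difference term would break; the symmetrization over $R$ in place of $R_+$ is then the device that lets me disregard the fact that $g$ may carry positive roots to negative ones.
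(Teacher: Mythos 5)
Your proposal is correct and follows essentially the same route as the paper: symmetrizing the sum over $R$ instead of $R_+$, using the equivariance $g^{-1}\sigma_v g=\sigma_{vg}$, the chain rule $\nabla(p(zg))=(\nabla p(zg))g$, and a change of summation variable under $Rg=R$ together with the $G$-invariance of $\kappa$. You merely conjugate $\mathcal{D}_u$ directly rather than verifying the equivalent intertwining identity $\mathcal{R}(g)\mathcal{D}_{ug}=\mathcal{D}_u\mathcal{R}(g)$ as the paper does, and you usefully spell out details the paper leaves implicit (the $v\mapsto -v$ symmetrization and the reliance on $g$ having real entries).
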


\begin{proof}
The complex Dunkl operator can be written as the sum over the root
system $R$ instead of $R_+$ by
\begin{eqnarray*}
 \mathcal{D}_u p(z)&=& \langle\nabla p(z), u\rangle + \frac{1}{2}\sum_{v\in R}
\kappa_v \frac{p(z)-p(z\sigma_v)}{\langle z, v\rangle} \langle v,
u\rangle.
\end{eqnarray*}
By definition
\begin{eqnarray*}
z\sigma_y g = (z\sigma_y) g= zg-2\frac{\langle z, y\rangle}{||y||^2}
yg=(zg)\sigma_{yg},
\end{eqnarray*}
which means
$$ \sigma_{y g}=g^{-1} \sigma_y g.$$
Therefore
$$v=yg \Longrightarrow g\sigma_{v}= \sigma_y g.
$$

Consequently,
\begin{eqnarray*}
\mathcal R(g)  \mathcal{D}_{ug} p(z)&=&\langle\nabla p(zg),
ug\rangle + \frac{1}{2}\sum_{v\in R} \kappa_v
\frac{p(zg)-p(zg\sigma_v)}{\langle zg, v\rangle} \langle v,
ug\rangle
\\
&=&\langle\nabla p(zg)g^{-1}, u\rangle + \frac{1}{2}\sum_{y\in R}
\kappa_y \frac{p(zg)-p(z\sigma_y g)}{\langle z, y\rangle} \langle y,
u\rangle
\\
&=&  \mathcal{D}_{u} (\mathcal R(g) p(z)).\end{eqnarray*} The last
step used the fact that
$$\nabla (p(zg))=(\nabla p(zg)) g.$$
 \fp
\end{proof}

For  $v\in\mathbb R^N\setminus\{0\}$, we  define the operator
$\rho_v$ by
$$
\rho_v f(x)=\frac{f(x)-f(x\sigma_v)}{\langle x, v \rangle}.
$$

\begin{lemma}\label{le:3.2} For any $u\in\mathbb C^N$ and\  $v\in\mathbb R^N\setminus
\{0\}$,
\begin{eqnarray*}
\langle\nabla, u\rangle\rho_v f(z)-\rho_v\langle\nabla, u\rangle
f(z) =\frac{\langle v, u\rangle}{\langle z, u\rangle}
\left(\frac{2\langle\nabla f(z\sigma_v), v\rangle}{\langle v,
v\rangle}-\frac{f(z)-f(z\sigma_v)}{\langle z, v\rangle}\right).
\end{eqnarray*}
\end{lemma}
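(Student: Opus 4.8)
The plan is to regard $\rho_v f$ as a quotient $g/h$ with numerator $g(z)=f(z)-f(z\sigma_v)$ and denominator $h(z)=\langle z,v\rangle$, and to apply the first-order operator $\langle\nabla,u\rangle=\sum_{j}\overline{u_j}\,\partial/\partial z_j$ via the quotient rule. Since $v\in\mathbb R^N$ is real, $h$ is a holomorphic linear function with $\partial h/\partial z_j=v_j$, so
$$\langle\nabla,u\rangle h=\sum_{j}\overline{u_j}\,v_j=\langle v,u\rangle,$$
and this scalar is exactly the overall prefactor appearing on the right-hand side. The quotient rule then gives
$$\langle\nabla,u\rangle\rho_v f=\frac{\langle\nabla,u\rangle g}{\langle z,v\rangle}-\frac{\langle v,u\rangle\,g}{\langle z,v\rangle^2},$$
so everything reduces to computing $\langle\nabla,u\rangle g$ and comparing the first term with $\rho_v\langle\nabla,u\rangle f$.

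The crucial step, and the one needing care, is differentiating the reflected term $f(z\sigma_v)$. Writing $w=z\sigma_v$ and applying the chain rule together with the symmetry $(\sigma_v)_{jk}=(\sigma_v)_{kj}$ and the reality of the matrix $\sigma_v$, I expect the ``reflection of the direction'' identity
$$\langle\nabla,u\rangle f(z\sigma_v)=\langle\nabla f(z\sigma_v),\,u\sigma_v\rangle.$$
Here one must track the conjugation in the Hermitian product, and the identity relies precisely on $\sigma_v$ having real entries, so that $\overline{(u\sigma_v)_k}=\sum_j\overline{u_j}(\sigma_v)_{jk}$. This yields $\langle\nabla,u\rangle g=\langle\nabla f(z),u\rangle-\langle\nabla f(z\sigma_v),u\sigma_v\rangle$.

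Finally I would substitute $u\sigma_v=u-2\frac{\langle u,v\rangle}{||v||^2}v$ and expand using conjugate-linearity in the second slot together with $\overline{\langle u,v\rangle}=\langle v,u\rangle$, obtaining
$$\langle\nabla f(z\sigma_v),u\sigma_v\rangle=\langle\nabla f(z\sigma_v),u\rangle-2\frac{\langle v,u\rangle}{||v||^2}\langle\nabla f(z\sigma_v),v\rangle.$$
Substituting back, the part $\frac{\langle\nabla f(z),u\rangle-\langle\nabla f(z\sigma_v),u\rangle}{\langle z,v\rangle}$ is exactly $\rho_v\langle\nabla,u\rangle f$, because $\langle\nabla,u\rangle f$ takes the value $\langle\nabla f,u\rangle$ at each point; subtracting it cancels that contribution and leaves
$$\frac{\langle v,u\rangle}{\langle z,v\rangle}\left(\frac{2\langle\nabla f(z\sigma_v),v\rangle}{||v||^2}-\frac{f(z)-f(z\sigma_v)}{\langle z,v\rangle}\right),$$
which is the asserted formula once $||v||^2$ is written as $\langle v,v\rangle$ (note that the computation naturally produces $\langle z,v\rangle$, rather than $\langle z,u\rangle$, in the outer denominator). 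The single genuine obstacle is the chain-rule identity for $f(z\sigma_v)$; once the reflection-of-direction rule is in hand, the rest is the quotient rule and the recognition of the cancelling $\rho_v\langle\nabla,u\rangle f$ term.
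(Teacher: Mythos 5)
Your proof is correct and takes essentially the same route as the paper's: both apply the quotient rule to $\rho_v f$, differentiate the reflected term via the chain-rule identity $\langle\nabla,u\rangle f(z\sigma_v)=\langle\nabla f(z\sigma_v),\,u\sigma_v\rangle$ (valid because $\sigma_v$ is real and symmetric), and then substitute $u\sigma_v=u-2\frac{\langle u,v\rangle}{\langle v,v\rangle}v$ using conjugate-linearity in the second slot. You also correctly spotted that the $\langle z,u\rangle$ in the lemma's displayed statement is a typo for $\langle z,v\rangle$, which is confirmed by how the lemma is invoked in the proof of the main theorem.
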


\begin{proof}
Notice that
$$
\langle \nabla R(\sigma_v) f,  u\rangle =
 \langle R(\sigma_v) \nabla
f \sigma_v^{-1},  u\rangle
=
 \langle R(\sigma_v) \nabla
f,  u \sigma_v\rangle.$$ Therefore
\begin{eqnarray*}
\langle \nabla,  u\rangle \rho_v f(z) &=&\frac{\langle \nabla
 f(z),  u\rangle-\langle \nabla  f(z\sigma_v),  u\sigma_v\rangle}{\langle z,
 v\rangle}\\
 &&\qquad -
\frac{\langle \nabla
 f(z),  u\rangle-(f(z)-f(z\sigma_v)) \langle v, u\rangle}{\langle z,
 v\rangle^2}\end{eqnarray*}
 and
 $$\rho_v \langle \nabla
,  u\rangle  f(z) = \frac{\langle \nabla
 f(z),  u\rangle-\langle \nabla  f(z\sigma_v),  u\rangle}{\langle z,
 v\rangle}.
$$
By subtracting the above two identities, the result follows from the
fact that
$$u-u\sigma_v=2
\frac{\langle u, v\rangle}{\langle v, v\rangle}.$$\fp
\end{proof}

\section{Main theorem}

Our main result gives the commutativity of the complex Dunkl
operators.

\begin{theorem} Let $R\subset \mathbb C^N$ be a root system satisfying Assumption A and let
the associated  complex Dunkl operators be
defined as in (\ref{eq3.3}). Then
$$\mathcal D_t \mathcal
D_u f(z)=\mathcal D_u \mathcal D_t f(z)$$ for any $f\in\Pi^N$ and
 $z, t, u\in\mathbb C^N$.
\end{theorem}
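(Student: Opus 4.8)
The plan is to prove commutativity by writing each complex Dunkl operator as a sum of a differential part and a difference part, and then expanding the commutator $\mathcal D_t\mathcal D_u - \mathcal D_u\mathcal D_t$ into pieces that can be cancelled using the structural lemmas. Recall from \eqref{eq3.3} that $\mathcal D_u f(z)=\langle\nabla f,u\rangle + \sum_{v\in R_+}\kappa_v\langle v,u\rangle\,\rho_v f(z)$, where $\rho_v$ is the difference operator introduced before Lemma~\ref{le:3.2}. The first step is to observe that the purely differential parts commute outright, since $\langle\nabla,t\rangle$ and $\langle\nabla,u\rangle$ are constant-coefficient operators. Hence the only contributions to the commutator come from cross terms between a differential and a difference part, and from pairs of difference parts.

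The second step is to handle the mixed terms using Lemma~\ref{le:3.2}, which computes exactly the commutator $[\langle\nabla,u\rangle,\rho_v]$. When I assemble the full commutator, each appearance of $[\langle\nabla,t\rangle,\rho_v]$ and $[\langle\nabla,u\rangle,\rho_v]$ gets replaced by the explicit right-hand side of that lemma; the antisymmetric combination in $t$ and $u$ should produce terms that are manifestly symmetric under swapping $t\leftrightarrow u$ and therefore cancel. I would write out $\mathcal D_t\mathcal D_u f - \mathcal D_u\mathcal D_t f$ carefully, grouping it as (mixed differential-difference terms) plus (difference-difference terms), and verify that the mixed block vanishes by Lemma~\ref{le:3.2} together with the $W$-invariance of $\kappa$ and the relation $\sigma_{yg}=g^{-1}\sigma_y g$ recorded in the proof of Lemma~4.1.

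The third and hardest step is the difference-difference block, i.e. the sum over pairs $v,w\in R_+$ of terms $\kappa_v\kappa_w\langle v,t\rangle\langle w,u\rangle\,\rho_v\rho_w$ minus the same with $t,u$ swapped. This is where the geometry of the root system genuinely enters. I expect this to be the main obstacle, since $\rho_v\rho_w$ does not commute with $\rho_w\rho_v$ term by term; the cancellation must be organized by summing over the orbit of the two-element configurations $\{v,w\}$ under the rank-two parabolic subgroups (dihedral subgroups) of the Coxeter group $G$. This mirrors Dunkl's original real argument, where one reduces to rank-two root subsystems and exploits that within each such subsystem the reflections satisfy braid-type relations forcing the problematic sum to telescope. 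Concretely, I would fix a generic $z$, expand $\rho_v\rho_w f$ using the reflection identities, and show that after summing the $\langle v,t\rangle\langle w,u\rangle$-weighted terms over each dihedral orbit the result is symmetric in $t$ and $u$.

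Finally, I would combine the two blocks: since the differential-differential part is trivially symmetric, the mixed part cancels by Lemma~\ref{le:3.2}, and the difference-difference part cancels by the rank-two reduction, the entire commutator vanishes. A useful simplification throughout is the coordinate-free reduction from Lemma~4.1, which lets me conjugate by $\mathcal R(g)$ and thereby assume favorable normalizations for the pair of roots under consideration. The key technical inputs are thus Lemma~\ref{le:3.2} for the mixed terms and the $G$-equivariance of the whole construction for organizing the difference-difference cancellation.
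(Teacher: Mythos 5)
Your overall skeleton coincides with the paper's: the same splitting of each operator into $\langle\nabla,\cdot\rangle$ plus $\sum_{v\in R_+}\kappa_v\langle v,\cdot\rangle\rho_v$, trivial commutation of the differential parts, and Lemma \ref{le:3.2} for the mixed block. One correction to your second step: the mixed cancellation needs neither the $G$-invariance of $\kappa$ nor the relation $\sigma_{yg}=g^{-1}\sigma_y g$, and Lemma 4.1 plays no role in the paper's proof of the theorem. The point is simply that the right-hand side of Lemma \ref{le:3.2} equals $\langle v,t\rangle$ times an expression independent of $t$, so $\langle v,u\rangle\left(\langle\nabla,t\rangle\rho_v-\rho_v\langle\nabla,t\rangle\right)f=\langle v,t\rangle\left(\langle\nabla,u\rangle\rho_v-\rho_v\langle\nabla,u\rangle\right)f$ holds term by term, with no conjugation or orbit argument needed.

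Where you genuinely diverge is the difference--difference block, and there your proposal has a gap. The paper does not redo the rank-two analysis: it isolates the coefficient $B(v,s)=\langle v,t\rangle\langle s,u\rangle-\langle v,u\rangle\langle s,t\rangle$, checks that $B$ is an antisymmetric bilinear form and that $B(v\sigma_y,s\sigma_y)=B(s,v)$ whenever $y=av+bs$ with $a,b\in\mathbb R$, and then invokes Corollary 4.4.7 of \cite{dunklx}, which asserts exactly that $\sum_{v,s\in R_+}\kappa_v\kappa_s B(v,s)\rho_v\rho_s f=0$ under these hypotheses, i.e.\ identity (\ref{eq4.4}). Your dihedral-orbit telescoping plan is, in substance, the proof of that corollary, so the underlying mathematics is the same; but in your write-up this step is only announced (``I would \ldots show''), and it is the entire crux of the theorem --- the cancellation is not term by term, and organizing it over rank-two subsystems is precisely the hard part of Dunkl's original argument. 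Moreover, nothing in your sketch addresses why the real rank-two combinatorics survives complexification: the pairing $\langle\cdot,\cdot\rangle$ is sesquilinear and $t,u\in\mathbb C^N$, so a priori $B$ is not a bilinear form on $\mathbb C^N$. The saving observation --- the one place Assumption A genuinely enters --- is that all roots lie in $\mathbb R^N$, the operators $\rho_v$ involve only real reflections, and $t,u$ appear solely through the coefficients $B(v,s)$, which are $\mathbb R$-bilinear and antisymmetric in the root arguments $v,s\in\mathbb R^N$; this is what lets the real-case cancellation (whether quoted from \cite{dunklx} or reproved via dihedral orbits) apply verbatim. To complete your proof you must either carry out the rank-two reduction explicitly or, as the paper does, verify the two properties of $B$ and cite the known corollary; as it stands, your hardest step is a citation-shaped hole.
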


\begin{proof}
By definition
\begin{eqnarray*}
\mathcal{D}_t f(z) &=\langle\nabla , t \rangle f(z)+\sum_{v\in R_+}
\kappa_v \langle v, t \rangle \rho_v f(z);
\\
\mathcal{D}_u f(z) &=\langle\nabla f(z), u \rangle+\sum_{v\in R_+}
\kappa_v \langle v, u \rangle \rho_v f(z). \end{eqnarray*} Therefore
\begin{eqnarray*}
\mathcal{D}_t \mathcal{D}_u f(z)&=&\langle\nabla, t \rangle
\mathcal{D}_u f(z) +\sum_{v\in R_+} \kappa_v \langle v, t \rangle
\rho_v \mathcal{D}_u f(z)
 \\
 &=&
\langle\nabla, t \rangle \left(\langle\nabla, u \rangle f(z)+
\sum_{v\in R_+} \kappa_v \langle v, u \rangle \rho_v f(z)\right)
\\
& & +\sum_{v\in R_+} \kappa_v \langle v, t \rangle \rho_v \left(
\langle\nabla, u \rangle  f(z)+\sum_{v\in R_+} \kappa_v \langle v, u
\rangle \rho_v f(z)\right)
\\
 &=&
\langle\nabla, t \rangle  \langle\nabla, u \rangle f(z) + \sum_{v\in
R_+} \kappa_v \langle v, u \rangle \langle \nabla, t \rangle \rho_v
f(z)
\\
& &  +\sum_{v\in R_+} \kappa_v  \langle v, t \rangle \rho_v \langle
\nabla, u \rangle  f(z) +\sum_{v,s\in R_+} \kappa_v  \kappa_s\langle
v, t \rangle \langle s, u \rangle\rho_v\rho_s  f(z)
\end{eqnarray*}

By symmetricity and notice that $$\langle\nabla, t \rangle
\langle\nabla, u \rangle f(z)=\langle\nabla, u \rangle
\langle\nabla, t \rangle f(z),$$ we obtain
\begin{eqnarray*}
& &\mathcal{D}_t \mathcal{D}_u f(z)-\mathcal{D}_u \mathcal{D}_t f(z)
\\ &=&\sum_{v\in R_+} \kappa_v \displaystyle\left\{\langle v, u \rangle
\left(\langle \nabla, t \rangle \rho_v-\rho_v\langle \nabla, t
\rangle \right)- \langle v, t \rangle \left(\langle \nabla, u
\rangle\rho_v -\rho_v\langle \nabla, u \rangle \right)
\displaystyle\right\}f(z)
\\
& & + \sum_{v,s\in R_+} \kappa_v  \kappa_s \left(\langle v, t
\rangle \langle s, u \rangle-\langle v, u \rangle \langle s, t
\rangle \right) \rho_v\rho_s f(z).
\end{eqnarray*}

The first summand vanishes since Lemma \ref{le:3.2} implies
\begin{eqnarray*}  & &\langle v, u \rangle
\left(\langle \nabla, t \rangle \rho_v-\rho_v\langle \nabla, t
\rangle \right) f(z)
\\
& &\qquad =\langle v, u \rangle \frac{\langle v, t\rangle}{\langle
z, v\rangle} \left(\frac{2\langle\nabla f(z\sigma_v),
v\rangle}{\langle v, v\rangle}-\frac{f(z)-f(z\sigma_v)}{\langle z,
v\rangle}\right)
\\
& &\qquad =\langle v, t \rangle \left(\langle \nabla, u
\rangle\rho_v-\rho_v\langle \nabla, u \rangle  \right) \displaystyle
f(z).
\end{eqnarray*}

To estimate the second summand, we denote $$ B(v, s)=\langle v, t
\rangle \langle s, u \rangle-\langle v, u \rangle \langle s, t
\rangle, \qquad \forall\ v, s\in\mathbb R^N.$$ It remains to show
\begin{eqnarray} \label{eq4.4}\sum_{v,s\in R_+} \kappa_v  \kappa_s B(v, s) \rho_v\rho_s
f(z)=0.\end{eqnarray}

Notice that $B(v, s)$ satisfies the following properties:

(i) $B(v, s)$ is an antisymmetric bilinear form.

(ii) $B(v \sigma_y, s\sigma_y)=B(s, v), \qquad \forall\ y=av+bs,
\quad a, b\in \mathbb R, \quad v, s\in R_+.$

Indeed, since $B(v,v)=B(s,s)=B(y,y)=0$ by antisymmetry, we have
\begin{eqnarray*}
B(v \sigma_y, s\sigma_y)&=&B(v-2\frac{\langle v, y\rangle}{||y||^2}
y, s-2\frac{\langle s, y\rangle}{||y||^2} y)
\\
&=&B(v,s)-2\frac{\langle s, y\rangle}{||y||^2}  b B(v, s)
-2\frac{\langle v, y\rangle}{||y||^2} a B(v, s)
\\
&=&-B(v,s)=B(s,v).
\end{eqnarray*}
These property imply  (\ref{eq4.4}) due  to Corollary  4.4.7 in
\cite{dunklx}. This completes the proof. \fp
\end{proof}

\begin{Example} In $\mathbb C^{1}$,  the root system $R=A_1$ is the unique system satisfying Assumption A. In this case, the reflection group $G=\mathbb Z_2$, and the multiplicity
function is given by a single parameter $\kappa\in\mathbb C$. The
complex Dunkl operator is  given  by
$$
\begin{array}{lcl}
\mathcal
Df(z)&=&f'(z)+\kappa\displaystyle\frac{f(z)-f(-z)}{z},\qquad z\in
\mathbb C.
\end{array}$$
\end{Example}

Due to the commutativity of the complex Dunkl operators, we can
introduce the related complex Dunkl Laplacian. Let $\Delta$ and
$\nabla$ be the complex Laplacian and gradient operator
$$\begin{array}{rcl}
\Delta_{\mathbb C} &=&\displaystyle\frac{\partial^2 }{\partial
z_1^2}+\cdots+ \frac{\partial^2 }{\partial z_n^2},
\\ \nabla&=&\left(\displaystyle\frac{\partial }{\partial z_1},\cdots, \frac{\partial }{\partial z_n}\right).
\end{array}
$$
The complex Dunkl Laplacian is defined as
$$\Delta_h=\mathcal D_1^2+\ldots+\mathcal D_N^2.$$
Similar to the real case in \cite{dunklx}, we have the formula
$$
\Delta_h f(z)=\Delta_{\mathbb C} f(z)+2\sum_{v\in R_+}
\kappa_v\displaystyle\frac{\langle \nabla f(z), v\rangle}{\langle z,
v\rangle}-2\sum_{v\in R_+} \kappa_v
\displaystyle\frac{f(z)-f(\sigma_v z)}{\langle z, v\rangle^2} |v|^2.
$$

\bigskip

\noindent \textbf{Acknowledgements} The research is partially
supported by the \textit{Unidade de Investiga\c c\~ao ``Matem\'atica
e Aplica\c c\~oes''} of University of Aveiro, and by the NNSF  of
China  (No. 10771201).

\end{document}